
\documentclass{interact}

\usepackage{mathtools}
\usepackage{xfrac}
\usepackage{maplestd2e}
\usepackage{amsfonts}
\usepackage{amsmath}
\usepackage{color}

\usepackage[utf8]{inputenc}
\usepackage{csquotes}

\newcommand{\uwk}{\ensuremath{\mathcal{K}}}

\usepackage{listings}
\lstset{
  basicstyle=\ttfamily,
  mathescape
}

\theoremstyle{thmit} 
\newtheorem{thm}{Theorem}[section]
\newtheorem{lem}[thm]{Lemma}
\newtheorem{cor}[thm]{Corollary}

\newtheorem{rmk}[thm]{Remark}

\DefineParaStyle{Maple Bullet Item}
\DefineParaStyle{Maple Heading 1}
\DefineParaStyle{Maple Warning}
\DefineParaStyle{Maple Heading 4}
\DefineParaStyle{Maple Heading 2}
\DefineParaStyle{Maple Heading 3}
\DefineParaStyle{Maple Dash Item}
\DefineParaStyle{Maple Error}
\DefineParaStyle{Maple Title}
\DefineParaStyle{Maple Text Output}
\DefineParaStyle{Maple Normal}
\DefineCharStyle{Maple 2D Output}
\DefineCharStyle{Maple 2D Input}
\DefineCharStyle{Maple Maple Input}
\DefineCharStyle{Maple 2D Math}
\DefineCharStyle{Maple Hyperlink}

\theoremstyle{thmrm} 


\newtheorem*{oldproof}{Proof}
\renewenvironment{proof}[1][{}]{\begin{oldproof}[#1]}{\qed\end{oldproof}}


\title{Stirling's Original Asymptotic Series from a formula like one of Binet's and its evaluation by sequence acceleration}

\author{
\name{Robert M. Corless\textsuperscript{a} and Leili Rafiee Sevyeri\textsuperscript{a}\thanks{CONTACT Leili Rafiee Sevyeri.
Email: lrafiees@uwo.ca}}
\affil{\textsuperscript{a}Ontario Research Center for Computer Algebra and The School of Mathematical \\ and Statistical Sciences. University of Western Ontario, London, Canada.}
}
\begin{document}
\maketitle
\begin{abstract}
We give an apparently new proof of Stirling's original asymptotic formula for the behavior of $\ln z!$ for large $z$. Stirling's original formula is not the formula widely known as ``Stirling's formula", which was actually due to De Moivre. We also show by experiment that this old formula is quite effective for numerical evaluation of $\ln z!$ over $\mathbb{C}$, when coupled with the sequence acceleration method known as Levin's $u$-transform. As an \textsl{homage} to Stirling, who apparently used inverse symbolic computation to identify the constant term in his formula, we do the same in our proof.
\end{abstract}
\section{Introduction}
Stirling's original formula for the asymptotics of $\ln z!$ has been obscured by the formula popularly known as ``Stirling's formula", namely
\begin{align}\label{1}
\ln z! \sim (z+\frac{1}{2})\ln z -z + \ln \sqrt{2\pi} + z \sum_{n\geq 1}\dfrac{B_{2n}}{2n(2n-1)}\cdot\dfrac{1}{z^{2n}} \\
\sim (z+\frac{1}{2})\ln z -z + \ln \sqrt{2\pi} + \dfrac{1}{12z}-\dfrac{1}{360z^{3}}+\mathcal{O}(\dfrac{1}{z^5})\>,
\end{align}
which was actually found by De Moivre after Stirling had found his (see, \textsl{e.g.},~\cite{bellhouse2011}). Stirling's original formula is
\begin{align}\label{2}
\ln z! \sim Z\ln Z- Z +\ln\sqrt{2\pi}-Z\sum_{n\geq 1}\dfrac{(1-2^{1-2n})B_{2n}}{2n(2n-1)Z^{2n}} \\
\sim Z\ln Z- Z +\ln\sqrt{2\pi} - \dfrac{1}{24Z}+\dfrac{7}{2880Z^3}-\mathcal{O}(\dfrac{1}{Z^5})\>.
\end{align}
where $Z=z+\frac{1}{2}$.\\
\\
As you can see here, the formulae are quite similar. Stirling's original formula in equation~\eqref{2} has been rediscovered several times. Some people call it De Moivre's formula! It seems to have been known to both Gauss and to Hermite (see \textsl{e.g.}~\cite{richard}). There is a discussion in~\cite{tweddle1984} of one such rediscovery in the physics literature; for a particularly ironic case where the rediscoverer claims the formula is ``both simpler and more accurate" than ``Stirling's formula", look at~\cite{spouge1994}. For a thorough exposition of Stirling's actual work see the original, as masterfully translated and annotated by Tweddle~\cite{tweddle1984}. \\
In this present work we give a short proof of equation~\eqref{2}, which we believe to be new, by deriving an apparently new formula that is similar to the following formula of Binet:
    \begin{equation}
    \ln z!=(z+\frac{1}{2})\ln z-z+\ln\sqrt{2\pi}+\int_{t=0}^{\infty}\dfrac{1}{t}  \left( \dfrac{1}{t}-\dfrac{1}{e^t-1}\right) e^{-tz}dt
    \end{equation}
which~\cite{whittaker} claims is valid for $\Re z>0$. We will see later that this is not quite true in the modern context. This classical formula is proved in, for example,~\cite{whittaker} and in~\cite{sasvari}. The new formula is quite similar, again using $Z=z+\frac{1}{2}$:
    \begin{equation}\label{6}
    \ln z!=Z\ln Z-Z +\ln\sqrt{2\pi}-\int_{t=0}^{\infty}\dfrac{1}{t}\left( \dfrac{1}{t}-\dfrac{1}{2\sinh\frac{t}{2}}\right) e^{-tZ}dt,
    \end{equation}
and is valid for $\Re z>-\dfrac{1}{2}$ (again, we will adjust this caveat later). Formula \eqref{6} appears as ``Theorem $2$", without proof, in~\cite{BorweinCorless1999}.\\
In the modern computational world, a new proof of an old mathematical result is rarely of interest for its own sake, but see for instance~\cite{nplus1}. Indeed Stirling's original proof of equation \eqref{2} was algorithmic in nature and, apart from the use of ``recognition" to identify $\sqrt{2\pi}$ and the lack of a ``closed formula"--- \textsl{i.e.} a relationship to other numbers, the Bernoulli numbers--- Stirling's proof was entirely satisfactory. So why record these results? \\
We believe this formula is interesting for the following reasons. First, the rediscovery was identified as such by tracing patterns and citations in Google Scholar, and now there is some hope that the obscurity of the original formula can be lifted\footnote{Of course, there is no hope of changing the popular meaning of the name ``Stirling's formula''.}. Of course the mathematics history literature has it right, owing to the work of Tweddle, but still. Second, Stirling's original proof used what is now called ``Inverse Symbolic Computation," illustrating that a modern experimental technique worth investigation has significant historical roots. As an \textsl{homage} to Stirling we use the same technique in our `new' proof below. Finally, we test Stirling's original formula in a modern computational context by trying a nonlinear sequence acceleration technique, namely Levin's $u$-transform; this gives a surprisingly viable method, comparable in cost (for a given accuracy) to the methods discussed in~\cite{schmelzer}. The separate issue of the complexity of the computation of $\Gamma(1+z)$, $z!$, or $n!$ for $n \in \mathbb{N}$, is not addressed here. See for instance~\cite{peter},~\cite{richard} for entry into that literature. See also~\cite{hare} for the computation of $\Gamma(z)$.\\
    Basic references for $\Gamma$ include the DLMF (chapter $5$), the Dynamic Dictionary, and~\cite{andrews1999}.
%
\section{Notation}
Here we use $z!$ and $\Gamma(z+1)$ interchangeably. As mentioned in~\cite{robgamma} the ``notation wars" and the annoyance of the continual nuisance of shifting by $1$ are amusing but not possible nowadays of resolution. We use $\ln$ for the natural logarithm because it's unambiguous and ingeniously, as pointed out by David Jeffrey offers a free location for a subscript, which we use as follows
\begin{equation}
\ln_k z=\ln z + 2\pi i k \>.
\end{equation}   
The unsubscripted $\ln z$ has range $-\pi < \Im \ln z \leq \pi$, the principal branch in universal usage nowadays in computers. We write $\ln z!$ for $\ln(z!)$; \textsl{i.e.} the factorial has higher precedence. We discuss the function $\ln\Gamma(z)$ in detail below as the analytic continuation of $\ln(z-1)!$. This modern notation is in contrast to Stirling's, where he used $\ell,z$ to mean $\log_{10}z$. The factorial notation $!$ was apparently invented by Christian Kramp in $1808$; the $\Gamma$ notation was invented by Legendre, and although the shift by $1$ as apparently due to Euler himself~\cite{gronau2003gamma}, Legendre gets the blame for that, too.

\section{Divergent asymptotic series}
\bigskip
For a given sequence $\lbrace \phi_i(x) \rbrace$ where the $\phi_i(x)$'s are defined over a domain, one can define a formal series
$\sum_{i= 1}^\infty a_i\phi_i(x)$. The idea of asymptotic series is to define a formal series with special property on the underlying sequence such that its partial sums approximate a given function over the same domain even more closely as $x\rightarrow x_0$.
\\
\\
Assume $R$ is a domain and $\lbrace \phi_i(x) \rbrace$ is a sequence of functions defined over $R$. The sequence $\phi_i(x)$ is called an asymptotic sequence for $x \rightarrow x_0$ in $R$ if for each $i$, $\phi_{i+1}(x)= o(\phi_i(x))$  as $x \rightarrow x_0$. A simple 
example is $\lbrace (x-x_0)^i \rbrace$ for $x \rightarrow x_0$. Recall that $f(x)=o(g(x))$ as $x\rightarrow\infty$ if 
\begin{equation}
\forall c > 0 \;\; \exists N>0 \;\;\; \text{s.t.} \;\;\; |f(x)|<c|g(x)| \;\;\; \text{for} \;\;\; x>N \>.
\end{equation}
or (if $x_0$ is finite)
\begin{equation}
\forall c > 0 \;\; \exists \delta>0 \;\;\; \text{s.t.} \;\;\; |f(x)|<c|g(x)| \;\;\; \text{for} \;\;\; |x-x_0|<\delta \>.
\end{equation}
Now suppose $\lbrace \phi_i(x) \rbrace$ is an asymptotic sequence which is defined over a domain $R$ and $f(x)$ is defined over
$R$ as well. The formal series $\sum_{i= 1}^\infty a_i \phi_i(x)$ is said to be an asymptotic expansion (series) to $n$ terms of 
$f(x)$ as $x \rightarrow x_0$ if 
\begin{equation}
f(x) = \sum_{i= 1}^n a_i \phi_i(x) + o(\phi_{n+1}(x)) \,\,\, \text{as} \,\,\, x\rightarrow x_0 \>.
\end{equation} 
The formal series $\sum a_i \phi_i$ will be called an asymptotic series. An asymptotic series can be divergent or convergent itself as $n\rightarrow \infty$. For more details see \textsl{e.g.}~\cite[Chapter 1]{erdelyi}.

\section{Tools}
\bigskip We will use Fubini's theorem, which justifies the interchange of order of iterated integrals of continuous functions, and we will use Watson's Lemma. Loosely speaking, Watson's Lemma allows the interchange of order of summation of a series and of integration even though the radius of convergence of the series is violated (leaving us with a divergent asymptotic series).\\
  \begin{lem}[Watson's Lemma]~\cite{bender1999} and~\cite{copson}
  Assume 
  $\alpha > -1$, $\beta >0$ and $b>0$.  
  If $f(t)$ is a continuous function on $[0,b]$ such that it has asymptotic series expansion 
  \begin{equation}
  f(t) \sim t^{\alpha} \sum_{n=0}^\infty a_nt^{\beta n}, \,\,\, t \rightarrow 0^+ \>,
\end{equation} 
(and if $b=+\infty$ then $f(t) <k\cdot e^{ct}\, (t\rightarrow +\infty)$ for some positive constants $c$ and $k$), then
\begin{equation}
 \int_0^b f(t)e^{-xt}dt \sim \sum_{n=0}^\infty \dfrac{a_n \Gamma (\alpha + \beta n +1)}{x^{\alpha	+\beta n+1}}, \,\,\, x \rightarrow +\infty
\end{equation}
\end{lem}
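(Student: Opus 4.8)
The plan is to prove the statement by the classical ``split, estimate the tail, integrate term by term'' strategy. First I would fix $\delta$ with $0 < \delta \le b$ (any $\delta$ will do when $b<\infty$; when $b=+\infty$ I would choose $\delta$ finite and use the growth hypothesis $f(t) < k\,e^{ct}$ to control what is left) and write
\begin{equation}
\int_0^b f(t)e^{-xt}\,dt = \int_0^{\delta} f(t)e^{-xt}\,dt + \int_{\delta}^{b} f(t)e^{-xt}\,dt \>.
\end{equation}
The second integral is exponentially small as $x\to+\infty$: on $[\delta,b]$ (compact, or with the $k\,e^{ct}$ bound at infinity) one gets a bound of the shape $C\,e^{-(x-c)\delta}$ for $x$ large, which is $o(x^{-M})$ for every $M$ and so cannot affect any term of the claimed series. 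So the whole content is the behaviour of $\int_0^{\delta} f(t)e^{-xt}\,dt$.

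Next I would use the hypothesis that $f(t)\sim t^{\alpha}\sum_{n\ge 0}a_n t^{\beta n}$ as $t\to 0^+$ in the precise sense of the earlier definition: for each $N$ there is $C_N>0$ and $\delta_N>0$ with
\begin{equation}
\Bigl| f(t) - t^{\alpha}\sum_{n=0}^{N} a_n t^{\beta n} \Bigr| \le C_N\, t^{\alpha + \beta(N+1)} \qquad (0 < t \le \delta_N) \>,
\end{equation}
shrinking $\delta$ if necessary so that $\delta \le \delta_N$. Note $\alpha>-1$ and $\beta>0$ guarantee $t^{\alpha+\beta n}$ is integrable at $0$, so all the pieces below converge. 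For each individual power I would compute, by the substitution $u=xt$,
\begin{equation}
\int_0^{\infty} t^{\alpha+\beta n} e^{-xt}\,dt = \frac{\Gamma(\alpha+\beta n+1)}{x^{\alpha+\beta n+1}} \>,
\end{equation}
and then observe $\int_0^{\delta} = \int_0^{\infty} - \int_{\delta}^{\infty}$, where the correction $\int_{\delta}^{\infty} t^{\alpha+\beta n} e^{-xt}\,dt$ is again exponentially small in $x$. Summing over $n=0,\dots,N$ produces exactly the first $N+1$ terms of the asserted series, up to exponentially small error.

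Finally I would bound the remainder: using the displayed inequality,
\begin{equation}
\Bigl| \int_0^{\delta}\!\Bigl( f(t) - t^{\alpha}\!\sum_{n=0}^{N} a_n t^{\beta n}\Bigr)e^{-xt}\,dt \Bigr| \le C_N \int_0^{\infty} t^{\alpha+\beta(N+1)} e^{-xt}\,dt = \frac{C_N\,\Gamma(\alpha+\beta(N+1)+1)}{x^{\alpha+\beta(N+1)+1}} \>,
\end{equation}
which is $O\!\left(x^{-(\alpha+\beta(N+1)+1)}\right) = o\!\left(x^{-(\alpha+\beta N+1)}\right)$ as $x\to+\infty$, i.e. of smaller order than the last retained term and than the next term $a_{N+1}\Gamma(\alpha+\beta(N+1)+1)/x^{\alpha+\beta(N+1)+1}$ would be (only the constant differs). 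Collecting the three contributions — main terms, exponentially small tail corrections, and this algebraically small remainder — and letting $N$ be arbitrary gives the asymptotic expansion in the sense defined in Section 3. The main obstacle, and the only place that needs genuine care, is the bookkeeping in this last step: one must make the error estimate uniform enough to conclude $o(\phi_{N+1}(x))$ for the truncation to $N$ terms, which is why it is essential to estimate the remainder against $t^{\alpha+\beta(N+1)}$ rather than merely $o(t^{\alpha+\beta N})$, and to extend the integral back to $[0,\infty)$ (losing only an exponentially small amount) so the Gamma-function evaluation applies cleanly.
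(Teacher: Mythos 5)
Your proof is correct, and it is the classical argument: split the integral at a finite $\delta$, show the tail is exponentially small (using continuity on $[\delta,b]$ or the bound $|f(t)|\le k\,e^{ct}$ when $b=+\infty$), replace $f$ on $[0,\delta]$ by its truncated expansion, extend each term to $[0,\infty)$ at exponentially small cost so that $\int_0^\infty t^{\alpha+\beta n}e^{-xt}\,dt=\Gamma(\alpha+\beta n+1)/x^{\alpha+\beta n+1}$, and bound the remainder by $C_N\,\Gamma(\alpha+\beta(N+1)+1)/x^{\alpha+\beta(N+1)+1}$. The paper itself gives no proof of this lemma --- it simply refers the reader to Bender--Orszag and Copson --- and your argument is essentially the proof found in those references, so there is nothing to reconcile. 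One small remark: your remainder is $O$ of the next term (hence $o$ of the last retained term), which matches the standard definition of an asymptotic expansion; the definition stated in Section 3 of the paper asks for $o(\phi_{n+1})$, which is a slight misstatement there rather than a gap in your proof.
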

For a proof of Watson's lemma, see~\cite{bender1999}.\\
We will also use Gauss' formula
\begin{equation}
\dfrac{\Gamma'(z+1)}{\Gamma(z+1)}=\int_{t=0}^{\infty}\dfrac{e^{-t}}{t}-\dfrac{e^{-tz}}{e^t-1}dt \;\;\; \text{for} \;\;\; \Re z>0
\end{equation}
a proof of which can be found for example in~\cite{whittaker}. Alternatively, a more elementary proof can be found in~\cite{sasvari}.\\
The next mathematical tool we need comes from a Laplace transform; using $\xi+\dfrac{1}{2}$ instead of the more common symbol $s$, the Laplace transform of $1$ is 
\begin{equation}
\int_{t=0}^{\infty}e^{-t(\xi+\frac{1}{2})}dt=\dfrac{1}{\xi+\frac{1}{2}}
\end{equation}
by direct integration. The integral converges if $\Re(\xi)>-\frac{1}{2}$. We can then prove the following lemma:\\
\begin{lem}[The logarithm lemma]\label{lemma}
For $\Re z > -1/2,$
\begin{equation}
\ln (z+\frac{1}{2})=\int_{t=0}^{\infty}\dfrac{e^{-t}}{t}-\dfrac{e^{-t(z+\frac{1}{2})}}{t}dt \>.
\end{equation}
\end{lem}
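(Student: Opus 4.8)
The plan is to recognize the right-hand side as a Frullani-type integral and to evaluate it by differentiating under the integral sign, using the Laplace-transform identity just recorded. Write $F(z) = \int_{t=0}^{\infty}\frac{e^{-t}-e^{-t(z+1/2)}}{t}\,dt$ for $\Re z > -1/2$. First I would check that this integral is well defined. As $t\to 0^{+}$ the numerator equals $(1-t+O(t^{2})) - (1-(z+\tfrac12)t+O(t^{2})) = (z-\tfrac12)t + O(t^{2})$, so the integrand extends continuously to $t=0$ with value $z-\tfrac12$ and is in particular bounded near the origin; as $t\to+\infty$ both exponentials decay (the second because $\Re(z+\tfrac12)>0$), so the integral converges absolutely, and uniformly for $z$ in compact subsets of the half-plane $\Re z > -\tfrac12$.

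Next I would differentiate with respect to $z$ under the integral sign. The partial derivative of the integrand is $e^{-t(z+1/2)}$ (the term $e^{-t}/t$ is annihilated and the factor $1/t$ cancels), and $\int_{t=0}^{\infty}e^{-t(z+1/2)}\,dt$ converges locally uniformly, so the interchange is legitimate (for instance by dominated convergence applied to the difference quotients, the dominating function being bounded near $t=0$ and exponentially small near $t=\infty$; equivalently $F$ is manifestly holomorphic and one may differentiate a convergent parameter integral of holomorphic integrands). By the Laplace-transform identity stated above, $F'(z) = \frac{1}{z+\frac12}$ throughout $\Re z > -\tfrac12$.

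Consequently $F(z) = \ln(z+\tfrac12) + C$ for some constant $C$: the half-plane $\Re z > -\tfrac12$ is simply connected and there $z+\tfrac12$ avoids $(-\infty,0]$, so the principal logarithm is holomorphic there, and $F - \ln(\cdot+\tfrac12)$ is holomorphic with vanishing derivative. To pin down $C$ I would evaluate at $z=\tfrac12$, where $z+\tfrac12 = 1$: the integrand is then identically zero, so $F(\tfrac12)=0=\ln 1 + C$, giving $C=0$ and hence the claimed identity.

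The only point needing genuine care is the simultaneous control of the integrand and its $z$-derivative near both endpoints $t=0$ and $t=\infty$ so as to justify differentiating under the integral; but since both are dominated, uniformly on compact $z$-sets, by a fixed function of $t$ that is bounded near $0$ and exponentially decaying at $\infty$, this is routine. Alternatively one can bypass the differentiation entirely by appealing directly to Frullani's integral $\int_{0}^{\infty}\frac{e^{-at}-e^{-bt}}{t}\,dt = \ln\!\frac{b}{a}$ for $\Re a,\Re b > 0$, taken with $a=1$ and $b=z+\tfrac12$.
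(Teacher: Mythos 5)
Your proof is correct, and it is the mirror image of the paper's argument rather than a copy of it. The paper starts from the Laplace-transform identity $\int_{0}^{\infty}e^{-t(\xi+\frac12)}\,dt=\frac{1}{\xi+\frac12}$, integrates it in $\xi$ from $\frac12$ to $z$, and exchanges the two integrals by Fubini, so the inner $\xi$-integration of $e^{-t(\xi+\frac12)}$ produces the $1/t$ factors and the formula drops out with the constant supplied by the lower limit $\ln(\frac12+\frac12)=0$. You instead start from the right-hand side $F(z)$, verify absolute and locally uniform convergence, differentiate under the integral sign to get $F'(z)=\frac{1}{z+\frac12}$ via the same Laplace identity, and integrate back, fixing the constant at $z=\frac12$ where the integrand vanishes identically --- the same anchoring point, reached by Leibniz's rule instead of Fubini. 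The two arguments are essentially dual; yours is somewhat more careful about the points the paper leaves implicit (holomorphy of $F$, the use of the principal branch on the simply connected half-plane $\Re z>-\frac12$, and the domination needed for the interchange), which matters since the lemma is later invoked for complex $z$, while the paper's version is shorter because Fubini on a manifestly convergent double integral needs no discussion of difference quotients. Your closing observation that the identity is just Frullani's integral with $a=1$, $b=z+\frac12$ is also correct and gives the quickest route of all, though it trades one classical lemma for another.
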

\bigskip
\begin{proof}
Integrate the Laplace transform with respect to $\xi$ from $\xi=\frac{1}{2}$ to $\xi=z$:
\begin{equation}
\int_{\xi=\frac{1}{2}}^{z}\dfrac{d \xi}{\xi+\frac{1}{2}}=\int_{\xi=\frac{1}{2}}^{z}\int_{t=0}^{\infty}e^{-t(\xi+\frac{1}{2})}dtd\xi
\end{equation}
Interchange the order of integration---by Fubini's Theorem this is valid---and since $\int e^{-t(\xi+\frac{1}{2})}d\xi=-\dfrac{e^{-t(\xi+\frac{1}{2})}}{t}$, we have
\begin{equation}
\ln(z+\frac{1}{2})-\ln(\frac{1}{2}+\frac{1}{2})=\int_{t=0}^{\infty}-\dfrac{e^{-t(z+\frac{1}{2})}}{t}+\dfrac{e^{-t(\frac{1}{2}+\frac{1}{2})}}{t}dt
\end{equation}
which proves the lemma.
\end{proof}
\bigskip
\section{The formula like Binet's}
\begin{thm}\label{2.3}
If $z> - \frac{1}{2}$, 
\begin{equation}\label{formula17}
\ln z!=(z+\frac{1}{2})\ln (z+\frac{1}{2})-(z+\frac{1}{2})+\ln\sqrt{2\pi}-\int_{t=0}^{\infty}\dfrac{1}{t}\left( \dfrac{1}{t}-\dfrac{1}{2\sinh \frac{t}{2}}\right) e^{-t(z+\frac{1}{2})}dt
\end{equation}
\end{thm}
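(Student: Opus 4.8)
The plan is to differentiate both sides of \eqref{formula17} with respect to $z$, to identify the two derivatives on the half-plane $\Re z>0$ by means of the logarithm lemma (Lemma \ref{lemma}) and Gauss' formula, and then to recover the additive constant. Write $F(z)$ for the right-hand side of \eqref{formula17} and set $g(t)=\dfrac{1}{t}\!\left(\dfrac{1}{t}-\dfrac{1}{2\sinh(t/2)}\right)$. From the expansion $2\sinh(t/2)=t+\tfrac{t^{3}}{24}+\cdots$ one checks that $g$ is continuous on $[0,\infty)$ with $g(0)=\tfrac1{24}$, that $t\,g(t)=\tfrac1t-\tfrac1{2\sinh(t/2)}$ vanishes at $t=0$, and that $g(t)$ and $t\,g(t)$ are $\mathcal{O}(1/t)$ as $t\to\infty$. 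Hence, for $\Re z$ bounded below by any fixed number exceeding $-\tfrac12$, the integrals $\int_0^\infty g(t)e^{-t(z+1/2)}\,dt$ and $\int_0^\infty t\,g(t)e^{-t(z+1/2)}\,dt$ converge absolutely and locally uniformly, which is what will license differentiation under the integral sign.

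First I would differentiate, obtaining
\[
F'(z)=\ln\!\left(z+\tfrac12\right)+\int_0^\infty\!\left(\frac1t-\frac1{2\sinh(t/2)}\right)e^{-t(z+1/2)}\,dt .
\]
Next I would substitute for $\ln(z+\tfrac12)$ its integral representation from Lemma \ref{lemma} and \emph{fold everything into a single integral}: the two occurrences of $\tfrac1t\,e^{-t(z+1/2)}$ then cancel, leaving
\[
F'(z)=\int_0^\infty\!\left(\frac{e^{-t}}{t}-\frac{e^{-t(z+1/2)}}{2\sinh(t/2)}\right)dt .
\]
Finally I would use $2\sinh(t/2)=e^{t/2}-e^{-t/2}$ to rewrite $\dfrac{e^{-t(z+1/2)}}{2\sinh(t/2)}=\dfrac{e^{-tz}}{e^{t}-1}$, so that Gauss' formula identifies $F'(z)$ with $\dfrac{\Gamma'(z+1)}{\Gamma(z+1)}=\dfrac{d}{dz}\ln z!$ for $\Re z>0$. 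Since both $\ln z!$ and $F$ are analytic on $\Re z>-\tfrac12$, it follows that $\ln z!=F(z)+C$ holds there for a single constant $C$, and it remains only to show $C=0$.

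For the constant I would follow Stirling and use ``recognition''. Setting $z=1$ in $\ln z!=(z+\tfrac12)\ln(z+\tfrac12)-(z+\tfrac12)+\ln\sqrt{2\pi}-\int_0^\infty g(t)e^{-t(z+1/2)}\,dt+C$ and using $\ln 1!=0$ expresses $C$ explicitly in terms of the single, rapidly convergent integral $\int_0^\infty g(t)e^{-3t/2}\,dt$; evaluating that integral numerically and recognizing the result by inverse symbolic computation returns $C=0$, i.e. confirms that the constant written in \eqref{formula17} is exactly $\ln\sqrt{2\pi}$. For completeness I would also record the rigorous alternative of letting $z\to+\infty$ along the reals: then $\int_0^\infty g(t)e^{-t(z+1/2)}\,dt\to0$ by dominated convergence, while $(z+\tfrac12)\ln(z+\tfrac12)-(z+\tfrac12)=(z+\tfrac12)\ln z-z+\mathcal{O}(1/z)$, so comparison with the classical Stirling--De Moivre expansion \eqref{1} again forces $C=0$.

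The hard part will be bookkeeping rather than analysis: one must keep the individually divergent pieces $\int_0^\infty t^{-1}e^{-t}\,dt$ and $\int_0^\infty t^{-1}e^{-t(z+1/2)}\,dt$ inside a single convergent integral when Lemma \ref{lemma} is substituted, and one must check that both the differentiation under the integral sign and the analytic continuation remain valid right up to the line $\Re z=-\tfrac12$. Once those points are handled, the whole identity reduces to the elementary relation $2\sinh(t/2)=e^{t/2}-e^{-t/2}$ together with Lemma \ref{lemma} and Gauss' formula, the value of the constant being the only ingredient that is genuinely imported (or ``recognized'').
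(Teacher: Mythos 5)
Your proof is correct and is essentially the paper's argument run in the opposite direction: the paper subtracts the logarithm lemma from Gauss' formula (rewritten via $2\sinh\frac{t}{2}=e^{t/2}-e^{-t/2}$) and integrates in $\xi$ from $\alpha$ to $z$ using Fubini, whereas you differentiate the candidate formula and match derivatives using exactly the same three ingredients, the two routes being mirror images (differentiation under the integral versus interchange of a double integral). Your determination of the constant also matches the paper's: numerical evaluation plus inverse symbolic recognition as the homage to Stirling, with the rigorous fallback of letting $z\to\infty$ and comparing against De Moivre's form of Stirling's formula, which is precisely the paper's Remark \ref{rmk5.3} and Remark \ref{rmk5.4} argument.
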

\begin{proof}
We start with Gauss' formula and switching to $\Gamma$ notation because the derivative $d\Gamma/dz$ is easily written $\Gamma'$,\\
\begin{equation}
\dfrac{\Gamma'(z+1)}{\Gamma(z+1)}=\int_{t=0}^{\infty}\dfrac{e^{-t}}{t}-\dfrac{e^{-tz}}{e^t-1}dt
\end{equation}
(see \textsl{e.g.}~\cite{whittaker}), and Lemma \ref{lemma}.\\


Rearranging Gauss' formula using $e^{t/2}-e^{-t/2}=2\sinh\frac{t}{2}$, \\
\begin{equation}
\dfrac{\Gamma'(z+1)}{\Gamma(z+1)}=\int_{t=0}^{\infty}\dfrac{e^{-t}}{t}-\dfrac{e^{-t(z+\frac{1}{2})}}{2\sinh\frac{t}{2}}dt
\end{equation}
Subtracting Lemma \ref{lemma},\\
\begin{equation}
\dfrac{\Gamma'(\xi+1)}{\Gamma(\xi+1)}-\ln (\xi+\frac{1}{2})=\int_{t=0}^{\infty}\dfrac{e^{-t(\xi+\frac{1}{2})}}{t}-\dfrac{e^{-t(\xi+\frac{1}{2})}}{2\sinh\frac{t}{2}}dt
\end{equation}
Integrating from $\xi=\alpha> -\frac{1}{2}$ to $\xi=z> -\frac{1}{2}$ and interchanging the order of integration using Fubini's theorem, we find (except for a branch issue that we take up later) that
$$\ln \Gamma(z+1)-\ln\Gamma(\alpha+1)-(z+\frac{1}{2})\ln (z+\frac{1}{2})+(z+\frac{1}{2})+(\alpha+\frac{1}{2})\ln(\alpha+\frac{1}{2})-(\alpha+\frac{1}{2})=$$
\begin{equation}
\int_{t=0}^{\infty}\dfrac{1}{t}\left(\dfrac{1}{t}-\dfrac{1}{2\sinh\frac{t}{2}}\right)e^{-t(\alpha+\frac{1}{2})}dt-\int_{t=0}^{\infty}\dfrac{1}{t}\left(\dfrac{1}{t}-\dfrac{1}{2\sinh\frac{t}{2}}\right) e^{-t(z+\frac{1}{2})}dt\>.
\end{equation}
We now need to evaluate the $\alpha$ integral. At $\alpha=0$ Maple and Mathematica can only find a numerical approximation; likewise at $\alpha=\frac{1}{2}$. The numerical approximation can be identified by (for instance) the Inverse Symbolic Calculator at CARMA\footnote{\textbf{https://isc.carma.newcastle.edu.au}. Remark: The ISC is currently down because a security flaw was found. Discussion is under way as to how or if this can be resolved.} (a proof is supplied in Remarks \ref{rmk5.3} and \ref{rmk5.4}.)
\begin{equation}
\int_{t=0}^{\infty}\dfrac{1}{t}\left(\dfrac{1}{t}-\dfrac{1}{2\sinh\frac{t}{2}}\right)e^{-t/2}dt=\frac{1}{2}\ln(\frac{\pi}{e})
\end{equation}
Simplification then yields our formula.\\
\begin{rmk}
According to~\cite{tweddle1984}, this may have been the method Stirling used to identify $\log_{10}\sqrt{2\pi}$, except of course all calculations were done by hand. Apparently, he simply recognized the number $0.39908$. Nowadays very few people could do that unaided, but with the ISC it's easy.
\end{rmk}
\begin{rmk}\label{rmk5.3}
In~\cite{sasvari} we find a trick that could be used to do this integral analytically; we leave this as an exercise.\\
If one desires an actual proof, one can use ``Stirling's formula" (by De Moivre) and leverage the tricky identification of $\sqrt{2\pi}$, as follows.
\end{rmk}
As $z\rightarrow \infty$,
\begin{equation}
\ln \Gamma(z+1)-(z+\frac{1}{2})\ln (z+\frac{1}{2})+(z+\frac{1}{2})\sim \ln\sqrt{2\pi}+\mathcal{O}(\frac{1}{z})\>.
\end{equation}
Therefore (since the second integral goes to $0$ as $z\rightarrow \infty$)
\begin{equation}
\ln\sqrt{2\pi}-\ln \Gamma(\alpha+1)+(\alpha+\frac{1}{2})\ln(\alpha+\frac{1}{2})-(\alpha+\frac{1}{2})
\end{equation}
\begin{equation}
=\int_{t=0}^{\infty}\dfrac{1}{t}\left(\dfrac{1}{t}-\dfrac{1}{2\sinh\frac{t}{2}}\right) e^{-t(\alpha+\frac{1}{2})}dt
\end{equation}
But this is, in fact, our desired theorem with $z=\alpha$.
\end{proof}
\begin{rmk}\label{rmk5.4}
This looks like a circular argument, but it is not. We have here used the $\sqrt{2\pi}$ from the formula popularly known as Stirling's formula, for which there are many proofs analytically (see \textsl{e.g.}~\cite{whittaker}).
\end{rmk}

\begin{cor}~\cite[p.~399]{levinson1970complex}
By analytic continuation, formula \eqref{formula17} holds for $\Re z\geq -1/2$, since the integral is convergent there.
\end{cor}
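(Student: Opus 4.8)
The plan is to show that both sides of \eqref{formula17} extend to functions that are holomorphic on the open half-plane $\{\Re z > -1/2\}$ and continuous on its closure $\{\Re z \geq -1/2\}$, and then to invoke the identity theorem (the two sides already agree on the real segment $(-1/2,\infty)$ by Theorem~\ref{2.3}) followed by a continuity argument to reach the boundary line $\Re z = -1/2$.

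First I would dispatch the elementary terms and the left-hand side. On $\{\Re z > -1/2\}$ we have $\Re(z+\tfrac12) > 0$, so the principal branch of $\ln(z+\tfrac12)$ is holomorphic there, hence so is $(z+\tfrac12)\ln(z+\tfrac12) - (z+\tfrac12) + \ln\sqrt{2\pi}$; and since $(z+\tfrac12)\ln(z+\tfrac12) \to 0$ as $z \to -\tfrac12$, this combination is in fact continuous on the whole closed half-plane. For the left-hand side, $\Gamma(z+1)$ is holomorphic and nonvanishing on a neighbourhood of $\{\Re z \geq -1/2\}$ — its only singularities are simple poles at $z = -1,-2,\dots$, all with real part $\leq -1$ — so the branch of $\ln\Gamma(z+1)$ obtained by continuing $\ln z!$ along the positive real axis, where it is real, is single-valued and holomorphic on the simply connected set $\{\Re z > -1/2\}$ and continuous up to the boundary. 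In particular no spurious $2\pi i k$ can creep in, precisely because the continuation is pinned to the real values on $(-1/2,\infty)$.

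Next I would treat the integral $I(z) = \int_0^\infty \frac{1}{t}\big(\frac{1}{t} - \frac{1}{2\sinh(t/2)}\big)e^{-t(z+1/2)}\,dt$. The bracketed factor has a removable singularity at the origin: from $2\sinh(t/2) = t\big(1 + t^2/24 + O(t^4)\big)$ one gets $\frac{1}{t} - \frac{1}{2\sinh(t/2)} = \frac{t}{24} + O(t^3)$, hence $\frac{1}{t}\big(\frac{1}{t} - \frac{1}{2\sinh(t/2)}\big) = \frac{1}{24} + O(t^2)$ as $t \to 0^+$, so the integrand is bounded on $(0,1]$. As $t \to \infty$, $1/(2\sinh(t/2))$ decays exponentially, so the bracket is $O(1/t^2)$, while $|e^{-t(z+1/2)}| = e^{-(\Re z + 1/2)t} \leq 1$ whenever $\Re z \geq -1/2$. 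Thus the integral converges absolutely, and on any compact subset of $\{\Re z \geq -1/2\}$ the integrand is dominated by a single integrable function of $t$; applying Morera's theorem together with Fubini (to interchange a closed-contour integral with the $t$-integral) shows $I(z)$ is holomorphic on $\{\Re z > -1/2\}$, and dominated convergence shows $I(z)$ is continuous on $\{\Re z \geq -1/2\}$.

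Finally, both sides of \eqref{formula17} are holomorphic on the connected open set $\{\Re z > -1/2\}$ and coincide on $(-1/2,\infty)$ by Theorem~\ref{2.3}, so by the identity theorem they coincide throughout $\{\Re z > -1/2\}$; since both are moreover continuous on $\{\Re z \geq -1/2\}$, the equality extends to the boundary line. The only genuinely delicate point is the integrability bookkeeping — the cancellation of the apparent $1/t^2$ singularity at $t = 0$ recorded above, and the fact that at $\Re z = -1/2$ the exponential no longer helps so that convergence at $t = \infty$ rests entirely on the $O(1/t^2)$ decay of the bracket — whereas the continuation itself is a routine application of standard machinery, which is presumably why \cite[p.~399]{levinson1970complex} can simply be cited for it.
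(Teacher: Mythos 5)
Your argument is correct and is essentially the paper's own: the paper simply asserts the extension ``by analytic continuation \ldots since the integral is convergent there'' (citing Levinson--Redheffer), and your proposal fills in exactly that route --- identity theorem on $\{\Re z>-1/2\}$ plus absolute convergence and continuity of the integral up to the line $\Re z=-1/2$, with the branch of $\ln\Gamma(z+1)$ pinned by its real values on the positive axis. (Only a cosmetic slip: it is the full integrand $\frac1t\bigl(\frac1t-\frac{1}{2\sinh(t/2)}\bigr)$, not the bracket alone, that is $O(1/t^{2})$ as $t\to\infty$, which is what you in fact use.)
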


\section{Evaluation of $\Gamma$ using this divergent series}
\subsection{First attempts}
It has long been known that ``Stirling's approximation'' leads to a viable method to evaluate $\ln\Gamma(z)$. The basic idea is to use the asymptotic series to evaluate $\ln\Gamma(z+n)$ for some large $n$ (large enough that the series gives some accuracy) and then work down with the recursive formula
\begin{equation}
\ln\Gamma(z+n-1)= -\ln(z+n-1)+\ln\Gamma(z+n)
\end{equation}
until we have reached $\ln\Gamma(z)$. This naive idea is surprisingly effective. The point of discussion is just how large $n$ should be, and how many terms in ``Stirling's series'' one should retain, in order to make an effective formula. \\
Given that we now have a different asymptotic formula under consideration (the original, more accurate, but certainly not ``new'' formula) all of the discussion points are necessarily changed. Just as an example, take (say), $z=11+i/2$. If we want $\ln((11+i/2)!)$ then Stirling's original series gives 
\begin{gather*}
\ln\sqrt{2\pi}+(11.5+i/2)\ln(11.5+i/2)-(11.5+i/2)-\dfrac{1}{24(11.5+i/2)}+\mathcal{O}(\dfrac{1}{z^3}) \\
=17.4914469445+1.22148819106i	
\end{gather*}
Wolfram Alpha confirms this, giving
$$\ln((11+i/2)!)\doteq 17.4914485209+1.22148798i \>.$$
Rather than get into the minutiae of how many terms to take, and how far to push the argument to the right, we take a different tack: we look at automatic sequence acceleration of the original divergent series. If 
\begin{equation}
S=\ln\sqrt{2\pi}+Z\ln Z-Z-Z\sum_{n\geq 1}\dfrac{(1-2^{1-2n})B_{2n}}{2n(2n-1)Z^{2n}}\>,
\end{equation}
then we wonder if simple execution of the Maple command 
\begin{equation}
\texttt{evalf}\texttt{(Sum(}\texttt{a(n)}\texttt{,n=1..}\texttt{infinity))};
\end{equation}
where $a(n)$ is defined as $\dfrac{(1-2^{1-2n})B_{2n}}{2n(2n-1)Z^{2n}}$ will automatically produce an accurate result.
\begin{displayquote}
``Sometimes Maple knows things that you don't know. And then you wonder just what.''
\begin{flushright}
--Jon Borwein.
\end{flushright}
\end{displayquote}
\subsection{Levin's $u$-transform}
What Maple knows here is called Levin's $u$-transform. This is a method to accelerate convergence of the sequence of partial sums
\begin{equation}
S_n=\sum_{j=1}^{n}a_j
\end{equation} 
of the series we consider. For an introduction to sequence acceleration, see~\cite{henrici1982} and~\cite{henrici1964}. For an introduction to Levin's $u$-transform, see~\cite{weniger}. \\
\\The basic idea is to replace the sequence $S_0, S_1, S_2,\cdots$ with a new one that has the same limit but which converges faster. More precisely, Levin's $u$-transform for $S_n$ is given as:
\begin{equation}
u_{k}^{(n)}(\beta , S_{n}) = \dfrac{\sum_{j=0}^{k} (-1)^{j} {k \choose j} \dfrac{(\beta + n + j)^{k-2}}{(\beta + n + k)^{k-1}}\dfrac{S_{n+j}}{a_{n+j}}}{\sum_{j=0}^{k} (-1)^{j} {k \choose j} \dfrac{(\beta + n + j)^{k-2}}{(\beta + n + k)^{k-1}}\dfrac{1}{a_{n+j}}}
\end{equation}
The parameter $\beta > 0$ is ``in principle completely arbitrary''~\cite{weniger}. In practice, Maple's routine chooses $\beta=1$.\\
For \textsl{irregular} sequence transforms such as Levin's $u$-transform, this may even transform divergent series into rapidly convergent ones. The price, however, is that it doesn't always work. It works well enough, though, that it is the default method coded in Maple~\cite{geddes1992}. It is accessed most simply by applying the ``evalf" command to an inert sum (denoted by capital-letter \texttt{Sum}). For instance,\\
\begin{equation}
\texttt{evalf}\texttt{(Sum((}\texttt{-2)}^\texttt{n}\texttt{,n=0..}\texttt{infinity))};
\end{equation}
yields $0.3333333333$\footnote{Correctly, in the sense of Euler summation, taking $1+r+r^2+\cdots=1/(1-r)$ even if $|r|>1$ by redefining what the infinite sum actually means: see \textsl{e.g.}~\cite{Hardy}, for more classical work on making sense of divergent series.}.\\
Other sequence acceleration methods or quadratures could be used (see for example chapter $28$ of~\cite{Trefethenbook}), but we wanted to show the capabilities of some (under-appreciated) off-the-shelf~tools.\\
If we issue the command (with a numerical value for $z$, say $z=11+i/2$)
\begin{lstlisting}
> evalf(-(z+1/2)*(Sum((1-2^(1-2*n))*$\mathrm{bernoulli}$(2*n)/
(2*n*(2*n-1)*(z+1/2)^(2*n)), n = 1 .. infinity))+
ln(sqrt(2*Pi))+ln(z+1/2)*(z+1/2)-(z-1/2);
\end{lstlisting}
we get $\ln((11+i/2)!)$ with full accuracy: $14$ digits if Digits $:=15$, $28$ digits if Digits $:=30$, $58$ digits if Digits $:=60$, and so on. This divergent series is being accurately, and quickly, summed by Maple's built-in sequence acceleration using the Levin $u$-transformation method above. \\
If we test this summation by looking at the error
\begin{equation}
\ln\Gamma(z+1)-\ln S(z)
\end{equation}
over a range $-20\leq \Re z\leq 20$, $-20\leq \Im z \leq 20$, we get the curious result in Figure~\ref{relerrRedBlue1}.
\begin{figure}[!h]
    \centering
    \includegraphics[scale=0.4]{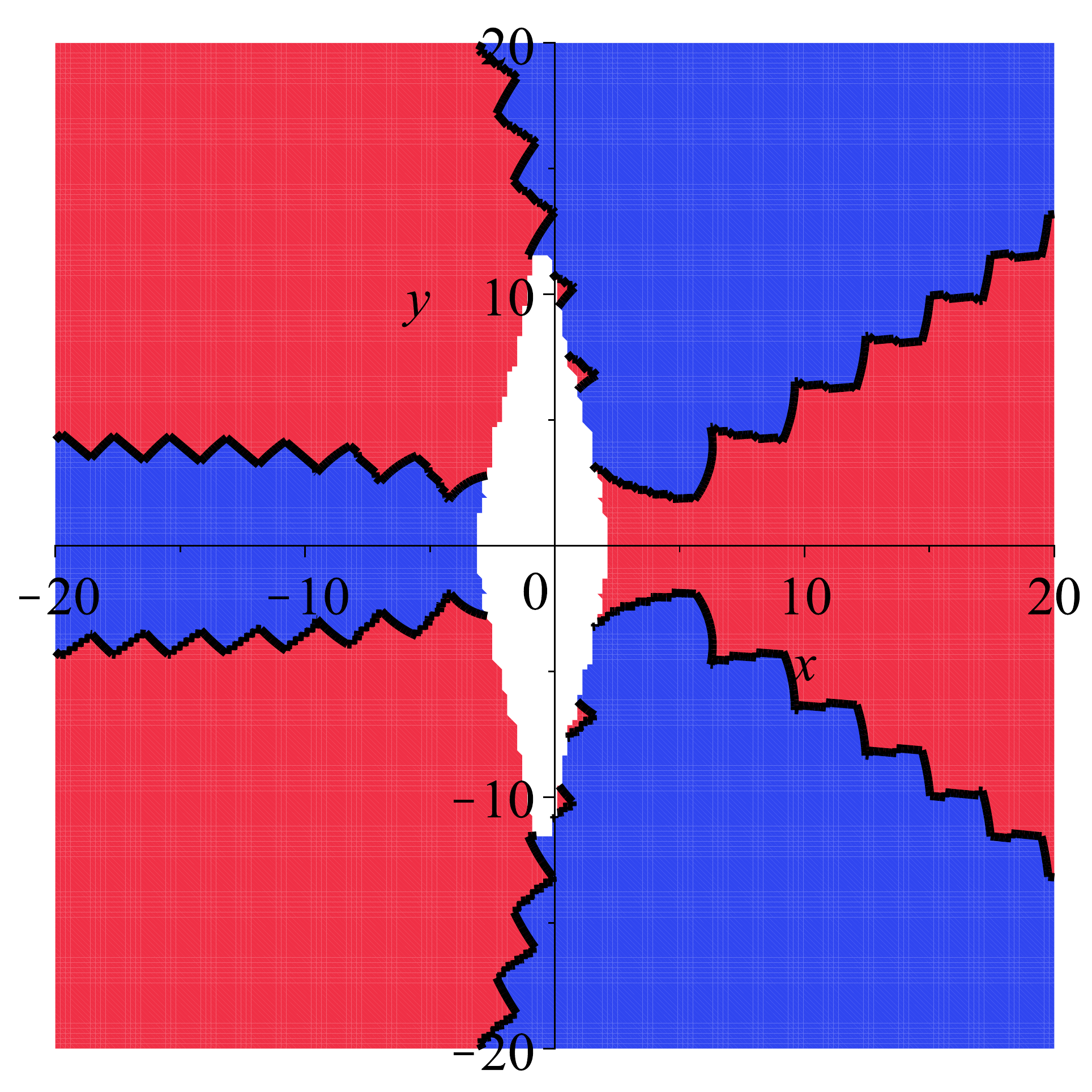}
    \caption{The region of utility for Levin's $u$-transform without an unwinding number.}
    \label{relerrRedBlue1}
\end{figure}
\\
Everywhere in the red region (which includes the real axis for $x$ larger than about $2.1$) has full accuracy, whatever the setting of Digits. The region in white, in the middle, with its scalloped edges, is the region where Levin's $u$-transform fails and Maple returns an unevaluated Sum, as one can see in the example below:

\begin{lstlisting}
> Digits := 20:
\end{lstlisting}
\begin{maplegroup}
\mapleresult
\begin{maplelatex}
\mapleinline{inert}{2d}{Digits := 20}{\[\displaystyle {\it Digits}\, := \,20\]}
\end{maplelatex}
\end{maplegroup}
\begin{lstlisting}
> z := 1+.1*I:
\end{lstlisting}
\begin{maplegroup}
\mapleresult
\begin{maplelatex}
\mapleinline{inert}{2d}{z := 1.+.1*I}{\[\displaystyle z\, := \, 1.0+ 0.1\,i\]}
\end{maplelatex}
\end{maplegroup}
\begin{lstlisting}
> evalf(-(z+1/2)*(Sum((1-2^(1-2*n))*$\mathrm{bernoulli}$(2*n)/
(2*n*(2*n-1)*(z+1/2)^(2*n)), n = 1 .. infinity))+
ln(sqrt(2*Pi))+ln(z+1/2)*(z+1/2)-(z-1/2);
\end{lstlisting}
\begin{maplegroup}
\mapleresult
\begin{maplelatex}
\mapleinline{inert}{2d}{(-1.50000000000000-.1*I)*(Sum((1/2)*(1-2^(1-2*n))*bernoulli(2*n)/(n*(2*n-1)*(1.50000000000000+.1*I)^(2*n)), n = 1 .. infinity))+0.2380532679023624382e-1+0.4062048632794543180e-1*I}
\bigskip
{\[\displaystyle  \left( - 1.50000000000000\\
\mbox{}- 0.1\,i\\
\mbox{} \right) \sum _{n=1}^{\infty }1/2\,{\frac { \left( 1-{2}^{1-2\,n} \right) {\it bernoulli}\left( 2\,n \right) }{n \left( 2\,n-1 \right) \\
\mbox{} \left(  1.50000000000000+ 0.1\,i\\
\mbox{} \right) ^{2\,n}}}\\
\bigskip
\bigskip
+ 0.02380532679023624382+ 0.04062048632794543180\,i\\
\mbox{}\]}
\end{maplelatex}
\end{maplegroup}
%
The boundary of this region is very curious, and we return to the proof of theorem \ref{2.3} to try to understand why. After staring at it for some time, we realize that the transition from 
\begin{equation}
\dfrac{\Gamma'(z+1)}{\Gamma(z+1)} \;\;\;  \text{to} \;\;\; \ln\Gamma(z+1)
\end{equation}
depends on the path that $\Gamma(\xi+1)$ takes as $\xi$ goes from $\xi=1/2$ to $\xi=z$ (a straight line in the $\xi$ variable). But $\Gamma(\frac{1}{2}+t(z-\frac{1}{2}))$ may cross the negative real axis (the branch cut for logarithm) several times as $t$ goes from $0$ to $1$. Writing our answers, as we do, as 
\begin{equation}
\ln z! \sim Z\ln Z-Z+\ln\sqrt{2\pi}+Z\sum_{n\geq 1}\dfrac{(1-2^{1-n})B_{2n}}{2n(2n-1)(Z)^{2n}}
\end{equation}
obscures the fact that the imaginary part of the logarithm on the left is in $(-\pi,\pi]$ while the imaginary part on the right might be anything. To make this equation actually true, we must subtract a multiple of $2\pi i$. To force the imaginary part of $S$ into $(-\pi,\pi]$ there is only one choice: replace $S$ by 
\begin{equation}
S-2\pi i \uwk(S)
\end{equation}
where $\uwk (z)=\left\lceil \dfrac{\Im z-\pi}{2\pi} \right\rceil$ is the unwinding number of $z$ (see~\cite{unwindHighm},~\cite{CorlessUnwind1} and~\cite{CorlessUnwind2}). This means that $\ln z! \sim S-2\pi i\uwk(S)$ not $\sim S$.\\
\begin{rmk}
As pointed out by a referee, this is because the sum $S$ is ``really" asymptotic to the analytic function $\ln\Gamma(z+1)$, obtained by analytic continuation of the function compostion $\ln(\Gamma(z+1))$ for $z>0$. See \textsl{e.g.}~\cite{hare} for details and for some simple formulae for $\uwk(S)$ in special cases.
\end{rmk}
When we plot the error $\ln z! - \ln S + 2\pi i \uwk(S)$ as in Figure~\ref{relerrRedb} we see that whenever the Levin's $u$-transform actually returns an answer, we have only roundoff error. We get essentially perfect accuracy\footnote{Except of course for rounding error. We do not attempt a numerical analysis here, which appears involved. The main difficulty is predicting the number of arithmetic operations.} everywhere to the right of the scalloped boundary in Figure~\ref{relerrRedb}. So far as we know, this result is new. Of course, the detailed accuracy needs a proof: we have only provided experimental evidence, here. What every mathematician wants is a guarantee that the acceleration will work, or a perfect description of just when it will fail. We do not have this. \\
However, when we plot the contours of the error $\ln z! - \ln S + 2\pi i\uwk(S)$ as in Figure~\ref{relerrContour2} we see that the Levin's $u$-transform works as well as could possibly be expected: the visible contours are all less than $10^{-28}$, when we work in $30$ Digits; clearly the error is zero up to roundoff. We have computed the error at ten thousand locations in the region $[0-1000i, 1000+1000i]$ and the maximum error was $10^{-27}$ (on a $100 \times 100$ grid).\\
\begin{figure}[!h]
    \centering
    \includegraphics[scale=0.3]{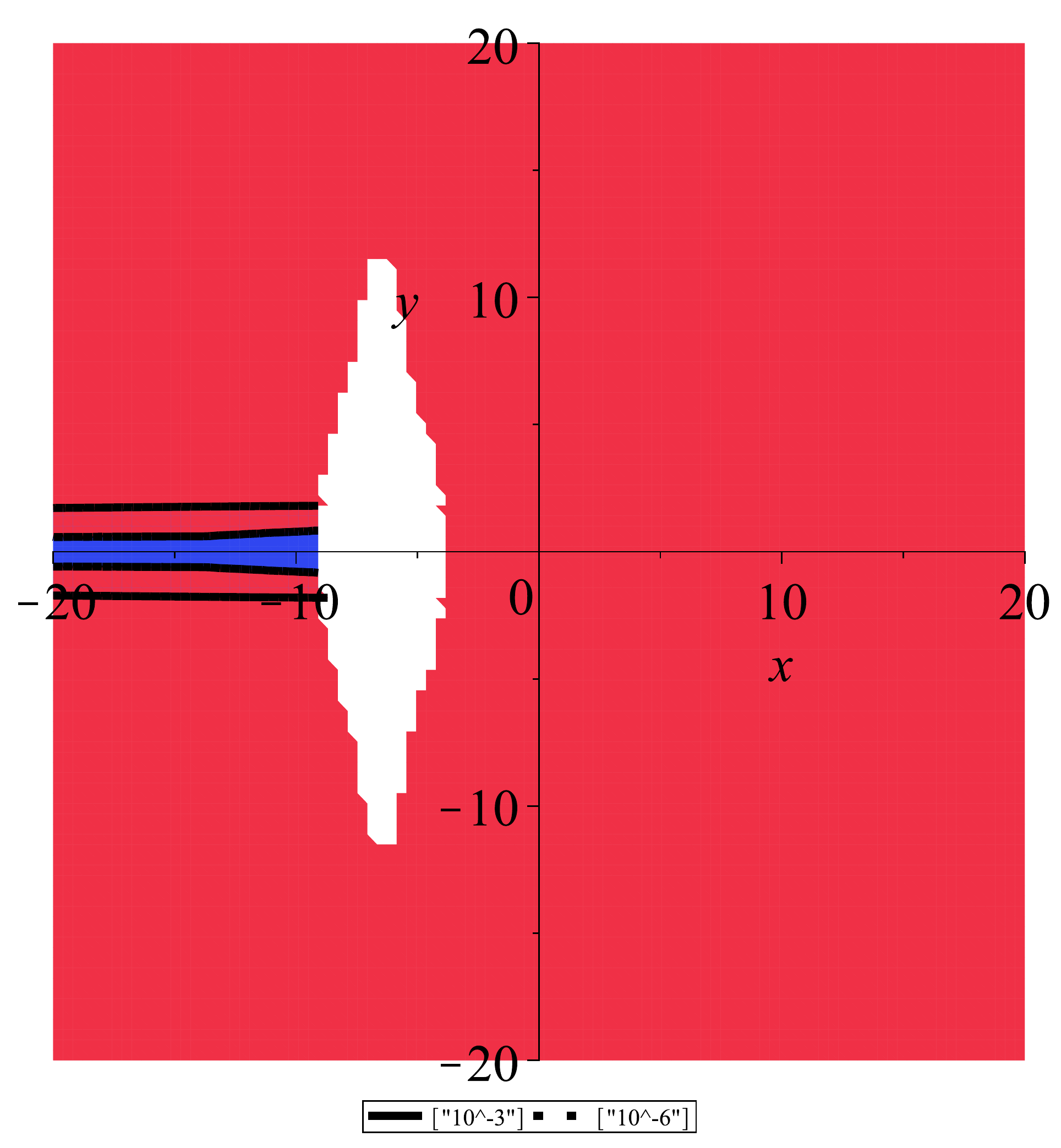}
    \caption{The region of utility for Levin's $u$-transform. We have essentially perfect accuracy (up to roundoff error) outside the region around the negative real axis and the ``lozenge of failure''. Curiously, the error increases gradually near the negative real axis.}
    \label{relerrRedb}
\end{figure}
\begin{figure}[!h]
    \centering
    \includegraphics[scale=0.45]{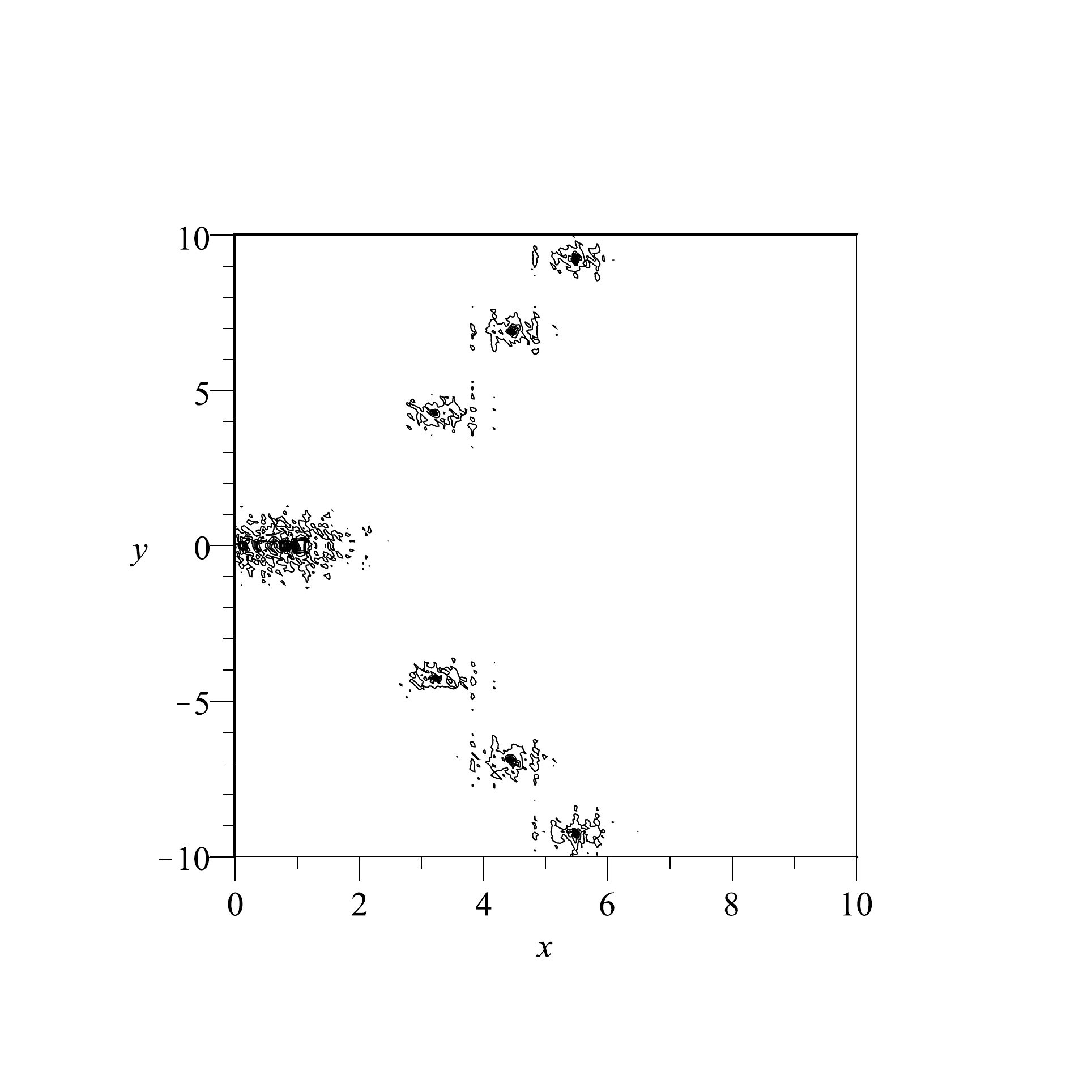}
    \caption{$3D$ plot looking straight down of the error of $\ln z! - \ln S + 2\pi i\uwk(S)$. The errors are everywhere less than $10^{-27}$. We work in $30$ digits of precision.}
    \label{relerrContour2}
\end{figure}
\subsection{Truncating the series without Levin's $u$-transform}
In this section, we plot the absolute estimate error of the truncated series $T$ (not using Levin's $u$-transform) $T-\ln(Z-1/2)!$ where 
\begin{align}
T=u-2\pi i\uwk(u)
\end{align}
and $u=Z\ln(Z)-Z+\ln(\sqrt{2\pi})-\dfrac{1}{24Z}$. For different contours ($10^{-3}$ and $10^{-6}$), we get a very curious result as one can see in Figure~\ref{truncseries}. The error is small outside the keyhole contour. This is more the kind of error we expect from truncated asymptotic series. We see good accuracy even with very few terms. It may be surprising to see that the error is small even in parts of the left half plane, although not near the negative real axis.
\begin{figure}[!h]
    \centering
    \includegraphics[scale=0.45]{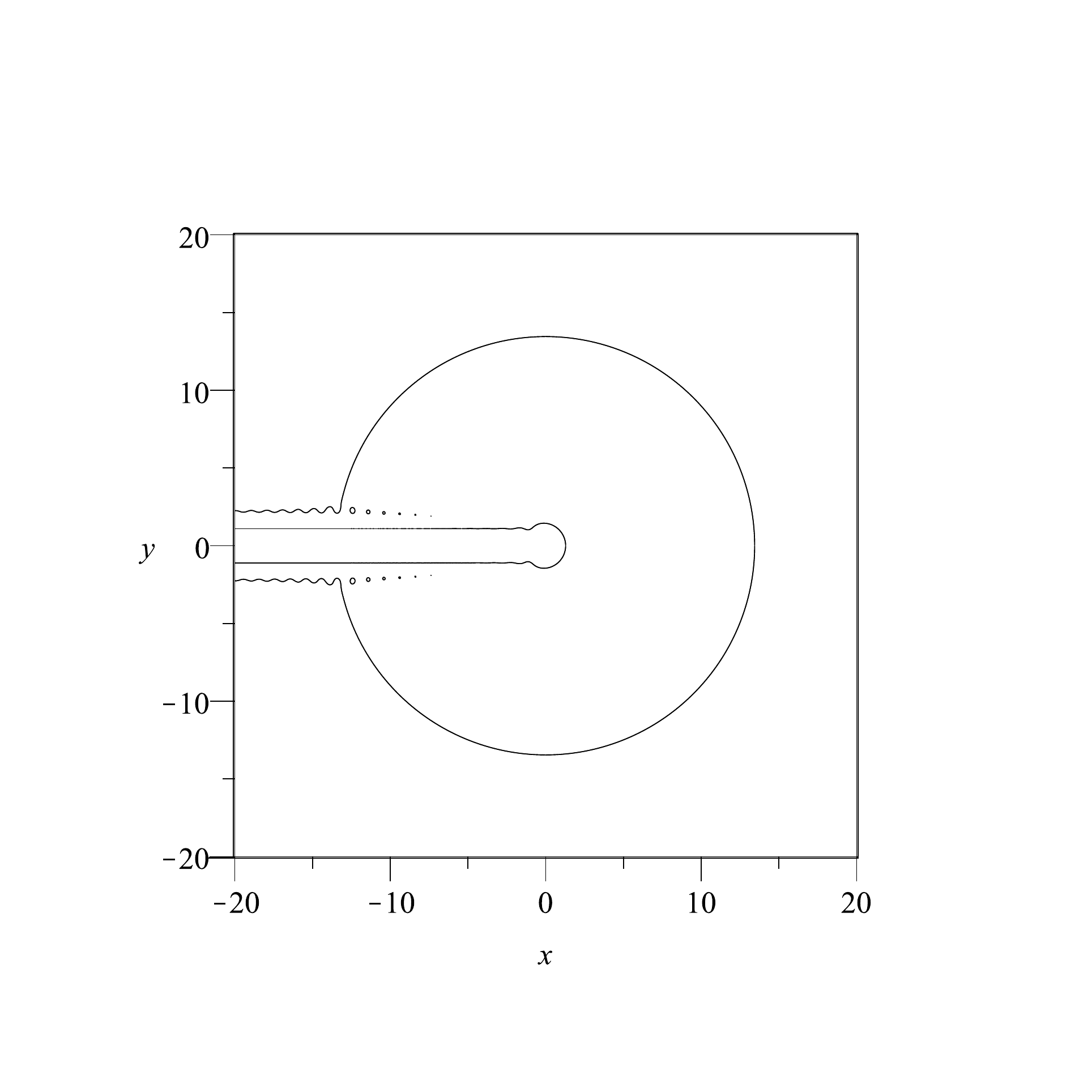}
    \caption{The absolute estimate error of the $T-\ln(Z-1/2)!$. The inner contour is at level $10^{-3}$, and the outer is $10^{-6}$. The truncation error is smaller outside each contour. We used Digits $=30$ and grid $=[600,600]$ in the construction of this figure. The ``bubbles'' and ``wiggles'' in this figure are unexplained.}
    \label{truncseries}
\end{figure}
\section{Concluding Remarks}
The Gamma function and the factorial function, invented in the $1700$'s, have been very thoroughly studied. Richard Brent's article~\cite{brentarxiv} points out some facts, known to Hermite and to Gauss, that were not covered in the survey~\cite{robgamma}, which looked at about $100$ references. One learns therefore that it is difficult to claim a result (formula or proof) is truly new; we are worried in particular that Gauss knew of our Binet--like formula proved here.\\
Nonetheless we believe the proof and numerical experiments have some value in the modern literature. The appearance of the unwinding number in the asymptotic series (either Stirling's or De Moivre's) may also be of value for people who write programs to compute $z!$.
\bibliographystyle{line}
\bibliography{bib}

\end{document}